\newtheorem{conj}{Conjecture}[section]
\newtheorem{theo}{Theorem}[section]
\newtheorem{lem}[conj]{Lemma}
\newtheorem{prop}[conj]{Proposition}
\newtheorem{coro}[conj]{Corollary}
\newcommand\independent{\protect\mathpalette{\protect\independent}{\perp}} 
\def\independent#1#2{\mathrel{\rlap{$#1#2$}\mkern2mu{#1#2}}}
\newcommand{\R}{\mathbb{R}}
\renewcommand{\P}{\mathbb{P}}
\newcommand{\E}{\mathbb{E}}
\DeclareMathOperator{\Var}{Var}
\newcommand{\Z}{\mathcal{Z}}
\newcommand{\eps}{\varepsilon}
\renewcommand{\Z}{\mathbb{Z}}
\newcommand{\N}{\mathbb{N}}
\date{\vspace{-5ex}}
\author{Heshan Aravinda, Arnaud Marsiglietti, James Melbourne}
\date{}
\begin{document}

\title{Concentration Inequalities for Ultra Log-Concave Distributions} 

\maketitle

\begin{abstract}
    
We establish concentration inequalities in the class of ultra log-concave distributions. In particular, we show that ultra log-concave distributions satisfy Poisson concentration bounds. As an application, we derive concentration bounds for the intrinsic volumes of a convex body, which generalizes and improves a result of Lotz, McCoy, Nourdin, Peccati, and Tropp (2019).

\end{abstract}

\vskip5mm
\noindent
{\bf Keywords:} Log-concave, ultra log-concave, concentration inequality, intrinsic volume.

\vskip5mm
\section{Introduction}

A random variable $X$ taking values in the set of natural numbers $\N = \{0, 1, 2, \dots\}$ is called log-concave if its probability mass function $p$ satisfies
$$ p(n)^2 \geq p(n-1)p(n+1), $$
for all integers $n \in \N$, and $X$ has contiguous support. For example, Bernoulli, binomial, geometric, and Poisson distributions are all log-concave. This class of discrete distributions appears naturally in probability theory and combinatorics (see, e.g., \cite{JP}, \cite{Br}, \cite{St}, \cite{Pe}, \cite{B}), and has been an object of recent study \cite{BMM, MT}. 

Ultra log-concave random variables, whose probability mass function satisfies
$$ p(n)^2 \geq \frac{n+1}{n} p(n-1)p(n+1), \quad n \in \N $$
on a contiguous support, form an important subclass of log-concave random variables. Equivalently, such discrete random variables are log-concave with respect to the Poisson distribution. Examples include sums of independent binomial with arbitrary parameters and Poisson distributions. Ultra log-concave distributions share important features. For example, the sum of independent ultra log-concave distributions remains ultra log-concave (see \cite{L}, \cite{G}), and an information theoretic characterization of the Poisson distribution as maximizing the Shannon entropy among ultra log-concave distributions under a mean constraint is proven in \cite{J} (see also \cite{JKM}). Many sequences have been proved to be ultra log-concave in graph and matroid theory (see, e.g., \cite{Ma}, \cite{H}, \cite{CS}, \cite{Le}, \cite{ALGV}). Recently, a fundamental ultra log-concave sequence in convex geometry, namely the intrinsic volumes of a convex body, has been studied in \cite{LMNPT}, where concentration inequalities are established. Concentration of (conic) intrinsic volumes have been demonstrated to rigorously explain threshold phenomena in recovery of signals and convex optimization problems with random data (see \cite{ALMT}).

In this article, we establish concentration inequalities for all ultra log-concave sequences. Our main result extends the classical Poisson tail bounds to the whole class of ultra log-concave distributions. Let us recall the following function
\begin{equation}\label{Ben}
    h(x) = 2 \frac{(1+x) \log(1+x) - x}{x^2}, \qquad x \in [-1, +\infty),
\end{equation}
sometimes called the Bennett function.

\begin{theo}\label{concentration}

Let $X$ be an ultra log-concave random variable. Then,
$$ \P(X-\E[X] \geq t) \leq e^{-\frac{t^2}{2 \E[X]} h(\frac{t}{\E[X]})}, \qquad \forall \, t \geq 0, $$
and
$$ \P(X - \E[X] \leq - t) \leq e^{-\frac{t^2}{2 \E[X]} h(-\frac{t}{\E[X]})}, \qquad \forall \, 0 \leq t \leq \E[X]. $$

\end{theo}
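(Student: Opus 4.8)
The plan is to reduce both tail bounds to a single moment generating function (MGF) estimate, namely that an ultra log-concave $X$ is dominated by the Poisson law of the same mean at the level of Laplace transforms:
$$ \E[e^{\theta X}] \le e^{\E[X](e^\theta - 1)}, \qquad \theta \in \R. $$
Granting this, both inequalities follow from the exponential Markov (Chernoff) bound. For the upper tail one writes, for $\theta \ge 0$, $\P(X - \E[X] \ge t) \le e^{-\theta(\E[X]+t)} \E[e^{\theta X}] \le \exp(\E[X](e^\theta - 1) - \theta(\E[X]+t))$ and optimizes at $\theta = \log(1 + t/\E[X])$; a direct computation converts the resulting exponent into exactly $-\frac{t^2}{2\E[X]} h(t/\E[X])$, using the identity $\frac{x^2}{2} h(x) = (1+x)\log(1+x) - x$. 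The lower tail is identical with $\theta \le 0$ and optimizer $\theta = \log(1 - t/\E[X])$, which is finite precisely when $t < \E[X]$ (the endpoint following by continuity), explaining the range restriction; the same algebraic identity produces $-\frac{t^2}{2\E[X]} h(-t/\E[X])$.

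The heart of the argument is therefore the MGF domination, which I would prove by a tilting/Gr\"onwall scheme. First, the exponentially tilted measure $p_\theta(n) \propto p(n) e^{\theta n}$ remains ultra log-concave for every $\theta$, since tilting multiplies the ratio $p(n)/p(n-1)$ by the constant $e^\theta$ and hence preserves the monotonicity of $n\,p(n)/p(n-1)$ that characterizes the class. Writing $K(\theta) = \log \E[e^{\theta X}]$, its first two derivatives are the mean and variance of the tilt, $K'(\theta) = \E_\theta[X]$ and $K''(\theta) = \Var_\theta(X)$. The key analytic input is the sub-Poissonian variance bound $\Var(Y) \le \E[Y]$ valid for every ultra log-concave $Y$; applied to each tilt it gives the differential inequality $K''(\theta) \le K'(\theta)$.

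Setting $m(\theta) = K'(\theta) = \E_\theta[X]$, this reads $m'(\theta) \le m(\theta)$ with $m(0) = \E[X]$, so a Gr\"onwall argument yields $m(\theta) \le \E[X] e^\theta$ for $\theta \ge 0$ and $m(\theta) \ge \E[X] e^\theta$ for $\theta \le 0$. Integrating $K(\theta) = \int_0^\theta m(s)\,ds$ against the comparison $\E[X](e^\theta - 1) = \int_0^\theta \E[X] e^s\,ds$ and tracking the integration direction in each regime gives $K(\theta) \le \E[X](e^\theta - 1)$ for all $\theta \in \R$, which is the desired bound. Finiteness of the MGF for every $\theta$ is automatic, since monotonicity of $n\,p(n)/p(n-1)$ forces $p(n) \le p(0) c^n/n!$ with $c = p(1)/p(0)$, a super-exponential (Poisson-type) decay.

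I expect the main obstacle to be the variance bound $\Var(Y) \le \E[Y]$ for ultra log-concave $Y$, which is the only place the defining inequality enters essentially rather than formally; I would either invoke it as a known property of the class or prove it directly from the monotonicity of $n\,p(n)/p(n-1)$. Everything else — tilt-stability, the Gr\"onwall step, and the Chernoff optimization — is soft once that estimate is in hand. A minor point to handle with care is the integral comparison for $\theta \le 0$, where the inequality $m(\theta) \ge \E[X]e^\theta$ points the "wrong" way but is repaired by the sign flip coming from integrating over $[\theta,0]$.
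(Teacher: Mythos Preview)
Your Chernoff reduction from the MGF bound is identical to the paper's derivation of Theorem~\ref{concentration} from Lemma~\ref{main}. The substantive difference lies in how the MGF domination $\E[e^{\theta X}] \le e^{\E[X](e^\theta-1)}$ itself is established. The paper proves it via a discrete localization technique: a convexity argument (Theorem~\ref{maximize}) reduces the inequality to extremal ``ultra log-affine'' distributions $p(n) = C\,p^n/n!\,1_{[k,l]}(n)$, and for these truncated-Poisson shapes the inequality is verified directly by showing an auxiliary function $f$ is convex with $f(1)=f'(1)=0$. Your route instead bootstraps from the sub-Poissonian variance inequality $\Var(Y)\le\E[Y]$ applied to every exponential tilt, yielding $K''\le K'$ and then integrating via Gr\"onwall.

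Your argument is correct and arguably more transparent once the variance bound is in hand, but mind the logical ordering: in this paper the variance bound (Proposition~\ref{var}) is \emph{deduced from} the MGF bound by Taylor expansion at $t=0$, not used as an input to it. So relative to the paper's internal development your proof would be circular; you genuinely need an independent proof of $\Var(X)\le\E[X]$ for ultra log-concave $X$. Such proofs do exist, but this step is not as soft as your proposal suggests --- it is precisely the analytic content that the paper isolates and dispatches via localization. What your approach buys is avoiding the localization machinery entirely and a clean differential-inequality picture; what the paper's approach buys is that the variance bound drops out as a free corollary rather than needing its own argument.
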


In the special case that $X$ is Poisson, Theorem \ref{concentration} is folklore. For this special case, an alternate proof can be obtained from an application of Bennett's inequality to i.i.d. sums of Bernoulli random variables as outlined in \cite{Pol} as exercise 16 (see also \cite{BLM}, \cite{C}, \cite{V}).

Using the standard properties of the Bennett function, namely that $h$ decreases from $2$ to $0$, $h(0) = 1$, and $h(x) \geq 1/(1+x)$ for $x \geq 0$, and the trivial fact that $\P(X - \E[X] \leq - t) = 0$ when $t > \E[X]$ since $X$ is supported on $\N$, we deduce a sub-Gaussian bound for the small deviations of an ultra log-concave random variable, and a sub-exponential bound for the large deviations.

\begin{coro}\label{bound}

Let $X$ be an ultra log-concave random variable. Then, for all $t \geq 0$,
$$ \P(X-\E[X] \geq t) \leq e^{-\frac{t^2}{2(t + \E[X])}}, $$
and
$$ \P(X - \E[X] \leq - t) \leq e^{-\frac{t^2}{2 \E[X]}}. $$

\end{coro}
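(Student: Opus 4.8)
The plan is to read off both inequalities directly from Theorem \ref{concentration}, using only the three stated properties of the Bennett function $h$ together with the support constraint $X \geq 0$. No new probabilistic input is needed; the entire task reduces to bounding the exponents appearing in the theorem.

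For the upper tail I would start from $\P(X - \E[X] \geq t) \leq e^{-\frac{t^2}{2\E[X]} h(t/\E[X])}$, valid for all $t \geq 0$. Setting $x = t/\E[X] \geq 0$ and invoking the property $h(x) \geq 1/(1+x)$, the exponent obeys $\frac{t^2}{2\E[X]} h(t/\E[X]) \geq \frac{t^2}{2\E[X]} \cdot \frac{1}{1 + t/\E[X]} = \frac{t^2}{2(\E[X] + t)}$. Since $s \mapsto e^{-s}$ is decreasing, this immediately yields the claimed sub-exponential bound.

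For the lower tail I would split on the size of $t$. On the range $0 \leq t \leq \E[X]$ where the second bound of Theorem \ref{concentration} applies, the argument $-t/\E[X]$ lies in $[-1, 0]$; since $h$ is decreasing with $h(0) = 1$, we get $h(-t/\E[X]) \geq h(0) = 1$, so $\frac{t^2}{2\E[X]} h(-t/\E[X]) \geq \frac{t^2}{2\E[X]}$ and hence the sub-Gaussian bound. For $t > \E[X]$ the event $\{X - \E[X] \leq -t\}$ forces $X < 0$, which is impossible since $X$ is supported on $\N$, so the left-hand side vanishes while the right-hand side $e^{-t^2/(2\E[X])}$ stays positive and the inequality holds trivially. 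The only points requiring care — the main, though minor, obstacle — are tracking the direction of monotonicity of $h$ at a negative argument and remembering that Theorem \ref{concentration} supplies the lower-tail estimate only for $0 \leq t \leq \E[X]$, so the extension to all $t \geq 0$ genuinely relies on the separate support argument rather than on the theorem itself.
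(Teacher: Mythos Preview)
Your proposal is correct and follows exactly the approach the paper indicates: it invokes Theorem~\ref{concentration} together with the stated properties of the Bennett function ($h$ decreasing, $h(0)=1$, $h(x)\geq 1/(1+x)$ for $x\geq 0$) and the support constraint $X\geq 0$ to handle $t>\E[X]$ in the lower tail. The paper does not write out a separate proof of the corollary, so your detailed verification of the exponent bounds is precisely what the paper omits.
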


Since the sum of independent ultra log-concave random variables is ultra log-concave, Corollary \ref{bound} applies to $X = S_n$, where $S_n = \sum_{k=1}^n X_k$, $n \geq 1$, with $X_k$'s independent ultra log-concave.

Let us note that Bennett type bounds have been established  under a discrete Bakry-\'Emery type log-concavity, called $c$-log-concavity, by Johnson \cite{Jo}. Proposition 8.1 in \cite{Jo} gives the bound
\begin{equation}\label{J}
\P(X \geq \E[X] + t) \leq e^{-\frac{ct^2}{2} h(ct)},
\end{equation}
for all $c$-log-concave random variables. According to \cite[Lemma 5.1]{Jo} and \cite[Lemma 5.3]{Jo}, if $X$ is ultra log-concave then $X$ is $c$-log-concave and $c \leq \frac{1}{\E[X]}$. Since the function $x \mapsto x h(x)$ is increasing, we deduce that
$$ e^{-\frac{t^2}{2 \E[X]} h(\frac{t}{\E[X]})} \leq e^{-\frac{ct^2}{2} h(ct)}. $$
Therefore, our bounds in Theorem \ref{concentration} are always stronger than the bound \eqref{J} from \cite{Jo} for all ultra-log-concave distributions. In fact, for any $\eps>0$, there is an ultra log-concave distribution $Z$ such that $Z$ is $c$-log-concave with $c < \eps$ and $\frac{1}{\E[Z]}$ is bounded away from 0 (take, for example, a truncated Poisson distribution with parameter tending to $+\infty$). For such ultra log-concave distributions, the bound \eqref{J} becomes weak.

Our results imply concentration bounds for the intrinsic volumes of a convex body in terms of their central intrinsic volume without reference to the ambient dimension (see Section 3 for the details). In the special case that $Z_K$ is the intrinsic volume random variable associated to $K$ convex body in $\mathbb{R}^n$, an immediate application of Corollary \ref{bound} yields the bound
$$ \P(|Z_K - \E[Z_K]| \geq t \sqrt{n}) \leq 2e^{-\frac{t^2}{2}}, \qquad 0 \leq t \leq \sqrt{n}, $$
see Corollary \ref{intrinsic}. This improves the bounds $2e^{\frac{-3t^2}{28}}$ from Lotz, McCoy, Nourdin, Peccati, and Tropp \cite{LMNPT}.

As we will see in Section 3, our bounds give considerable improvement when the central intrinsic volume is of order less than $n$, since if $\E[Z_K]$ is less than, say, $\sqrt{n}$, then our bounds improve as the dimension grows to infinity, hence reflecting the high-dimensional aspect of the concentration bound. Specifically, if $\E[Z_K] \leq \sqrt{n}$, Corollary \ref{bound} yields
$$ \P(|Z_K - \E[Z_K]| \geq t \sqrt{n}) \leq 2e^{-\frac{t^2}{2(t+1)}\sqrt{n}}, $$
which goes to $0$ exponentially fast with the dimension, for any fixed $t > 0$.

In high dimension, Poisson type concentration bounds are optimal for intrinsic volume random variables. This can be seen by considering the following scaling of the unit cube $K = \frac{\lambda}{n-\lambda} [0,1]^n$ for $\lambda >0$.  In this case, the intrinsic volume random variable $Z_K(n)$ is Binomial with parameters $n$ and $p = \frac \lambda n$, so that $Z_K(n)$ tends to a Poisson with parameter $\lambda$ with $n \to \infty$.

Our main theorem also implies the following estimate on the variance of an arbitrary ultra log-concave random variable.

\begin{prop}\label{var}

Let $X$ be an ultra log-concave random variable. Then,
$$ \Var(X) \leq \E[X], $$
with equality when $X$ is Poisson.

\end{prop}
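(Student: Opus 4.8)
The plan is to extract the variance bound from the exponential moment estimate that drives Theorem \ref{concentration}, rather than from the tail bounds directly: integrating the Bennett tail bounds would only yield a non-sharp constant (the Chernoff bound strictly overestimates the Poisson tails, so it would strictly overestimate the variance of the Poisson law), whereas the sharp inequality $\Var(X) \le \E[X]$ together with the Poisson equality case is already visible at the level of the Laplace transform. Write $\mu = \E[X]$. The engine behind the upper tail in Theorem \ref{concentration} is the domination of the moment generating function by that of the Poisson law with the same mean, namely
\[
\E\big[e^{\theta X}\big] \le \exp\big(\mu(e^\theta - 1)\big), \qquad \theta \ge 0 ,
\]
since optimizing $\theta$ in the Chernoff bound $\P(X - \mu \ge t) \le e^{-\theta(\mu+t)}\E[e^{\theta X}]$ reproduces exactly the Bennett exponent $\tfrac{t^2}{2\mu}h(t/\mu)$. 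I would reuse this estimate as the starting point.

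Next I would pass to cumulants. Set $K(\theta) = \log \E[e^{\theta X}]$; by Corollary \ref{bound} the upper tail of $X$ is sub-exponential, so $\E[e^{\theta X}] < \infty$ for $\theta$ in a neighborhood of $0$ and $K$ is smooth (indeed analytic) there, with $K(0) = 0$, $K'(0) = \mu$, and $K''(0) = \Var(X)$. Writing $K_P(\theta) = \mu(e^\theta - 1)$ for the Poisson cumulant generating function, the estimate above reads $K(\theta) \le K_P(\theta)$ for $\theta \ge 0$. I would then consider $g(\theta) = K_P(\theta) - K(\theta)$, so that $g \ge 0$ on $[0,\delta)$ for some $\delta > 0$, with $g(0) = 0$ and $g'(0) = K_P'(0) - K'(0) = \mu - \mu = 0$.

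Finally I would read off the second-order term. Since $g$ is smooth with $g(0) = g'(0) = 0$, Taylor's theorem gives $g(\theta)/\theta^2 \to \tfrac12 g''(0)$ as $\theta \to 0^+$; as $g(\theta) \ge 0$ on the right of $0$, this forces $g''(0) \ge 0$. But $g''(0) = K_P''(0) - K''(0) = \mu - \Var(X)$, whence $\Var(X) \le \mu = \E[X]$. For the equality case, if $X$ is Poisson with parameter $\lambda$ then $\E[e^{\theta X}] = e^{\lambda(e^\theta-1)}$ exactly, so the domination is an equality and one reads off $\Var(X) = \lambda = \E[X]$.

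I expect the only genuine obstacle to be justifying (or citing from the proof of Theorem \ref{concentration}) the Laplace-transform domination $\E[e^{\theta X}] \le \exp(\mu(e^\theta-1))$ for $\theta \ge 0$; once this is in hand, the remaining steps — finiteness and smoothness of $K$ near $0$, and the one-sided Taylor comparison — are routine. A secondary technical point is ensuring that differentiation under the expectation is legitimate and that $K''(0)$ equals the variance, both of which follow immediately from the sub-exponential decay in Corollary \ref{bound}.
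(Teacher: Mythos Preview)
Your proposal is correct and follows essentially the same approach as the paper: both derive the variance bound from the Laplace-transform domination $\E[e^{\theta X}] \le \exp(\mu(e^\theta-1))$ (which in the paper is isolated as Lemma~\ref{main}) by expanding to second order at $\theta=0$. The only cosmetic difference is that you pass to the cumulant generating function and compare $K''(0)$, whereas the paper expands $\E[e^{\theta X}]$ directly and compares $\E[X^2]$ with $\E[Z^2]$; the content is identical, and the ``genuine obstacle'' you flag is exactly what Lemma~\ref{main} supplies.
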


In short, among ultra log-concave variables of fixed expectation, the Poisson has maximum variance. In the context of intrinsic volumes, Proposition \ref{var} says that the variance of an intrinsic volume random variable is less than its central intrinsic volume.  Moreover, this variance bound is independent of the dimension of the ambient Euclidean space that a convex body is embedded within.

Bounded ultra log-concave random variables, say $X$ supported on $\{0, \dots, n\}$, satisfy $\Var(X) \leq n$, by application of the trivial inequality $\mathbb{E}[X] \leq n$. Applying to the particular case that $X$ is the intrinsic volume random variable associated to convex body in $\mathbb{R}^n$, improves the bound $\Var(Z_K) \leq 4n$ from \cite{LMNPT}.  Again, since an arbitrary Binomial can be realized as the distribution of an intrinsic volume random variables, Proposition \ref{var} is sharp for intrinsic volume random variables.

The article is organised as follows. In Section 2 we prove our main result Theorem \ref{concentration}, and derive Proposition \ref{var}. In Section 3 we apply our results to study the intrinsic volume random variables.

\section{Proofs of Theorem \ref{concentration} and Proposition \ref{var}}

The main ingredient of proofs is the following bound on the moment generating function of an ultra log-concave distribution.

\begin{lem}\label{main}

Let $X$ be an ultra log-concave random variable. Then, for all $t \in \R$,
\begin{equation}\label{MGF}
    \E[e^{tX}] \leq \E[e^{tZ}],
\end{equation} 
where $Z$ is a Poisson distribution with parameter $\E[X]$.

\end{lem}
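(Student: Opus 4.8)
The plan is to prove the stronger pointwise statement that the probability generating function $G(z) = \E[z^X] = \sum_{n} p(n) z^n$ satisfies $G(z) \le e^{\lambda(z-1)}$ for every $z > 0$, where $\lambda = \E[X]$; the Lemma then follows by taking $z = e^t$, since $e^{\lambda(e^t - 1)} = \E[e^{tZ}]$. Because an ultra log-concave law has super-geometrically decaying mass (writing $a_n := n!\,p(n)$, the sequence $(a_n)$ is merely log-concave, so $p(n+1)/p(n) = \frac{a_{n+1}}{a_n}\cdot\frac{1}{n+1}\to 0$), the series $G$ is entire and $\E[e^{tX}] = G(e^t)<\infty$ for all $t$, so the manipulations below are justified. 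The key claim I would establish is that $\log G$ is \emph{concave} on $(0,\infty)$. Granting this, concavity places the graph below its tangent line at $z = 1$; since $\log G(1) = 0$ and $(\log G)'(1) = G'(1)/G(1) = \E[X] = \lambda$, that tangent line is exactly $z \mapsto \lambda(z-1)$, giving $\log G(z) \le \lambda(z-1)$ for all $z>0$ simultaneously. This single inequality covers both $t \ge 0$ and $t < 0$, which is the main reason to route the argument through concavity rather than a termwise comparison of factorial moments (the latter would degenerate for $z<1$, where the binomial-moment series alternates in sign).

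It remains to prove $\log G$ concave, i.e. that $D(z) := G'(z)^2 - G(z)G''(z) \ge 0$ for $z > 0$. Expanding, $D(z) = \sum_{n,m} m(n - m + 1)\, p(n)p(m)\, z^{n+m-2}$, so it suffices to show that each coefficient $d_s := \sum_{n+m=s} m(n-m+1)p(n)p(m)$ is nonnegative. These coefficients are \emph{not} termwise nonnegative, and this is the crux. I would symmetrize by pairing the summand indexed by $m$ with the one indexed by $s+1-m$, an involution of $\{1,\dots,s\}$ whose only fixed point carries the vanishing factor $s-2m+1$. The paired contribution equals $(s+1-2m)\big[m\,p(m)p(s-m) - (s+1-m)\,p(m-1)p(s+1-m)\big]$, and recalling $a_n = n!\,p(n)$ this bracket simplifies to $\frac{1}{(m-1)!\,(s-m)!}\big(a_m a_{s-m} - a_{m-1}a_{s+1-m}\big)$.

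Taking the representative $m < (s+1)/2$ in each pair makes the prefactor $s+1-2m$ positive and forces $m \le s-m$, so the remaining task is the inequality $a_m a_{s-m} \ge a_{m-1} a_{s+1-m}$ for indices with $m \le s-m$. This is precisely the standard consequence of log-concavity of $(a_n)$: since $a_n/a_{n-1}$ is nonincreasing on the (contiguous) support, one has $a_i a_j \ge a_{i-1}a_{j+1}$ whenever $i \le j$, which I apply with $i = m$, $j = s-m$. This yields $d_s \ge 0$ for every $s$, hence $D(z) \ge 0$ and the desired concavity. The main obstacle is thus entirely contained in this coefficient computation: finding the pairing that converts the indefinite quadratic form $d_s$ into a positive combination of the elementary log-concavity inequalities for $(n!\,p(n))$. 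I would finally remark that this route is not circular with Proposition \ref{var}: the variance bound is recovered afterwards by matching the second-order Taylor coefficients of the two sides of \eqref{MGF} at $t = 0$, rather than being used as an input here.
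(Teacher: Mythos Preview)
Your proof is correct and takes a genuinely different route from the paper's. The paper first invokes a discrete localization theorem from \cite{MM} to reduce \eqref{MGF} to the extremal ultra log-affine laws $p(n)=C\,p^{n}/n!\cdot 1_{[k,l]}(n)$, and then establishes for those the convexity of $f(y)=\lambda(y-1)-\log\E[y^{X}]$ (equivalently, the concavity of $\log G$) via a log-concave convolution argument applied to the partial exponential sums $\Psi_{k,l}(x)=\sum_{n=k}^{l}x^{n}/n!$. You bypass the localization step entirely by proving the concavity of $\log G$ directly for \emph{every} ultra log-concave law: the pairing $m\leftrightarrow s+1-m$ rewrites each power-series coefficient of $G'^{2}-GG''$ as a nonnegative combination of the elementary inequalities $a_{i}a_{j}\ge a_{i-1}a_{j+1}$, $i\le j$, for the log-concave sequence $a_{n}=n!\,p(n)$. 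Your argument is more elementary and self-contained (no compact-support truncation, no external extremization machinery), while the paper's route has the virtue of exhibiting the extremal distributions explicitly and of illustrating a transferable localization technique; for this particular lemma, however, your direct computation is shorter.
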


Lemma \ref{main} tells us that Poisson distributions maximize the moment generating function in the class of ultra-log-concave random variable, under a mean constraint.

The proof of Lemma \ref{main} rely on a discrete localization technique recently developed by the second and third named authors in \cite{MM} (see also \cite{KLS}, \cite{FG04}, \cite{FG06}, \cite{LS}, \cite{E}, \cite{BM}, \cite{BM2}, \cite{K} for other aspects of the localization technique). The main idea is to reduce the desired inequality to extreme points, and then prove the inequality for these extreme points. Let us recall the results from \cite{MM}.

For $M,N \in \Z$, denote $[M,N] = \{M, \dots, N\}$. Let $M, N \in \Z$. Let us denote by $\mathcal{P}([M,N])$ the set of all probability measures supported on $[M,N]$. Let $\gamma$ be a measure with contiguous support on $\mathbb{Z}$, and let $h \colon [M,N] \to \mathbb{R}$ be an arbitrary function. Let us consider $\mathcal{P}_h^{\gamma}([M,N])$ the set of all distributions $\P_X$ in $\mathcal{P}([M,N])$, log-concave with respect to $\gamma$, and satisfying $\E[h(X)] \geq 0$, that is,
$$ \mathcal{P}_h^{\gamma}([M,N]) = \{ \P_X \in \mathcal{P}([M,N]) : X \mbox{ log-concave with respect to } \gamma, \, \E[h(X)] \geq 0 \}. $$

Consider probability mass functions of the form

    \begin{equation}\label{extremizers}
        p(n) = C p^n q(n)1_{[k, l]}(n) ,
    \end{equation}
    for some $C, p >0$, $k,l \in [M,N]$, where $q$ is the mass function of $\gamma$.

\begin{theo}[\cite{MM}]\label{maximize}

Let $\Phi \colon \mathcal{P}_h^{\gamma}([M,N]) \to \R$ be a convex function. Then
$$ \sup_{\P_X \in \mathcal{P}_h^{\gamma}([M,N])} \Phi(\P_X) \leq \sup_{\P_{X} \in \mathcal{A}_h^{\gamma}([M,N])} \Phi(\P_{X}), $$
where $\mathcal{A}_h^{\gamma}([M,N]) =  \mathcal{P}_h^{\gamma}([M,N]) \cap \{ \P_{X} : X \mbox{ with probability mass function of the form \eqref{extremizers}} \}$.

\end{theo}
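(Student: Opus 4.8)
\section*{Proof proposal for Theorem \ref{maximize}}

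The plan is to realize the right-hand side as the extreme points of a genuinely convex compact set and then invoke the convexity of $\Phi$. Write $S = \mathcal{P}_h^\gamma([M,N])$ and let $K = \overline{\conv}(S)$ be the closed convex hull of $S$ inside the probability simplex on $[M,N]$, a compact convex subset of a finite-dimensional space. A preliminary point is that $S$ is compact: it is cut out of the simplex by the closed conditions $\E[h(X)]\ge 0$ and the log-concavity inequalities $f(n)^2\ge f(n-1)f(n+1)$ for $f=p/q$, and a limit of such laws keeps a contiguous support, since concavity of $\log(p/q)$ bounds the interior values from below by the chord joining the endpoints, precluding interior gaps. Granting this, Milman's partial converse to the Krein--Milman theorem gives $\mathrm{ext}(K)\subseteq \overline{S}=S$, while the Minkowski--Carath\'eodory theorem expresses every $\P_X\in S\subseteq K$ as a finite convex combination $\P_X=\sum_i \lambda_i e_i$ of extreme points $e_i\in\mathrm{ext}(K)$. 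If one shows that each such $e_i$ has a mass function of the form \eqref{extremizers}, i.e.\ $e_i\in\mathcal{A}_h^\gamma([M,N])$, then convexity of $\Phi$ yields $\Phi(\P_X)\le\sum_i\lambda_i\Phi(e_i)\le\sup_{\mathcal{A}_h^\gamma}\Phi$, and taking the supremum over $\P_X\in S$ is the assertion.

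The whole content is therefore the claim that $\mathrm{ext}(K)\subseteq\mathcal{A}_h^\gamma([M,N])$, which I would prove in contrapositive form: any $\P_X\in S$ whose mass function is \emph{not} of the form \eqref{extremizers} is a proper convex combination of two elements of $S$, hence not extreme in $K$. Let $[k,l]$ be the contiguous support of $p$ and put $L=l-k+1$. Failing to be of the form \eqref{extremizers} means exactly that $\log(p/q)$ is not affine on $[k,l]$, so at least one of the $L-2$ interior log-concavity inequalities is strict. I would then seek a nonzero perturbation $\delta$ supported on $[k,l]$ such that both $\P_X+\epsilon\delta$ and $\P_X-\epsilon\delta$ lie in $S$ for some $\epsilon>0$; the endpoints then exhibit $\P_X$ as the midpoint of a nondegenerate segment in $S$.

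Existence of $\delta$ rests on a dimension count. Since $p>0$ throughout $[k,l]$, nonnegativity of $\P_X\pm\epsilon\delta$ and preservation of the \emph{strict} (non-tight) inequalities are automatic for small $\epsilon$. The conditions that genuinely constrain $\delta$ are linear: normalization $\sum_n\delta(n)=0$; the moment condition $\sum_n h(n)\delta(n)=0$, but only when the constraint is active, i.e.\ $\E[h(X)]=0$; and, at each interior point $n$ where the inequality is tight, the first-order condition $\nabla g_n\cdot\delta=0$, where $g_n(p)=p(n)^2/q(n)^2-p(n-1)p(n+1)/(q(n-1)q(n+1))$. Because at least one interior inequality is strict, there are at most $L-3$ tight ones, so these homogeneous linear conditions number at most $(L-3)+2<L$, and their solution space is nontrivial.

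The main obstacle is to turn this first-order count into an honest two-sided perturbation, since at a tight point $n$ the quadratic $\epsilon\mapsto g_n(\P_X\pm\epsilon\delta)$ has vanishing linear term but second-order term $2\delta(n)^2/q(n)^2-2\delta(n-1)\delta(n+1)/(q(n-1)q(n+1))$, which must be nonnegative at every tight $n$ for the log-concavity inequalities to survive in \emph{both} directions. I expect this to be the delicate step, resolved by selecting $\delta$ within the above linear solution space so that at each tight interior $n$ the factors $\delta(n-1)$ and $\delta(n+1)$ have opposite signs (or one vanishes), which makes every second-order term automatically nonnegative; the available freedom, guaranteed by the strict dimension inequality, should permit such a choice. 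Once this is arranged, $\P_X\pm\epsilon\delta$ are distinct members of $S$ averaging to $\P_X$, so $\P_X\notin\mathrm{ext}(K)$. This proves $\mathrm{ext}(K)\subseteq\mathcal{A}_h^\gamma([M,N])$ and completes the argument.
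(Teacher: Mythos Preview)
The paper does not prove Theorem~\ref{maximize}; it is quoted from \cite{MM} as a black box, so there is no in-paper argument to compare against. Your outline---pass to the closed convex hull $K$ of $S=\mathcal{P}_h^\gamma([M,N])$, locate $\mathrm{ext}(K)$ inside $S$ via Milman, identify the extreme points with the log-affine profiles \eqref{extremizers}, and conclude by Carath\'eodory plus convexity of $\Phi$---is the standard localization route and is structurally sound.

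The one place your write-up is genuinely incomplete is exactly where you flag it: you leave the second-order condition at tight points as ``the available freedom \dots\ should permit such a choice.'' In fact no choice is needed; the first-order tangency already forces the second-order inequality. At a tight interior $n$, write $f=p/q$ and $\eta=\delta/q$. Tightness $f(n)^2=f(n-1)f(n+1)$ gives $f(n\pm 1)=r^{\pm 1}f(n)$ for some $r>0$, and your condition $\nabla g_n\cdot\delta=0$ becomes
\[
2\eta(n)=r\,\eta(n-1)+\tfrac{1}{r}\,\eta(n+1).
\]
If $\eta(n-1)$ and $\eta(n+1)$ share a sign, AM--GM applied to $r\,\eta(n-1)$ and $\eta(n+1)/r$ gives $|2\eta(n)|\ge 2\sqrt{\eta(n-1)\eta(n+1)}$, hence $\eta(n)^2\ge\eta(n-1)\eta(n+1)$; if they have opposite signs (or one vanishes) this is trivial. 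Thus \emph{every} nonzero $\delta$ produced by your dimension count already yields $g_n(\P_X\pm\eps\delta)\ge 0$ at all tight $n$, and the two-sided perturbation goes through with no sign-engineering. With this observation your argument closes.

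One wrinkle worth stating explicitly: since $S$ is not convex, ``$\Phi$ convex'' must mean that $\Phi$ is the restriction to $S$ of a convex function on the ambient simplex (or at least on $\conv S$); your Jensen step for the Carath\'eodory decomposition tacitly uses this extension. It is harmless in the paper's application, where $\Phi(\P_X)=\E[e^{tX}]$ is linear, but it deserves a sentence in a general proof.
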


\begin{proof}[Proof of Lemma \ref{main}]
Fix an ultra log-concave random variable $X_0$. By approximation, one may assume that $X_0$ is compactly supported, say on $\{M, \dots, N\}$. Fix $t \in \R$. By Theorem \ref{maximize}, applied to $\Phi(\P_X) = \E[e^{tX}]$, which is linear, and to the constraint function $h(n) = \E[X_0] - n$, it suffices to prove inequality \eqref{MGF} for ultra log-affine random variables with respect to the Poisson measure, that is, for distributions of the form \eqref{extremizers} with $q(n)=1/n!$:
\begin{align*}
    p(n) = C \frac{p^n}{n!} 1_{[k, l]}(n)
\end{align*}
for $p > 0$ and $0 \leq k \leq l$, where $\frac 1 C =  {\sum_{n=k}^l \frac{p^n}{n!}}$. In this case we wish to prove for $t \in \mathbb{R}$,
\begin{align*}
    C \sum_{n=k}^l \frac{e^{tn}p^n}{n!} = \mathbb{E}[e^{tX}] \leq e^{\mathbb{E}[X] (e^t - 1)} = e^{C \sum_{n=k}^l \frac{n p^n}{n!} (e^t-1)}.
\end{align*}
Write $y = e^{t}$ and define, for $K,L \in \mathbb{Z}$,
$$ \Psi_{K,L}(x) = \sum_{n=K}^{L} \frac{x^n}{n!} $$
if $L \geq K \geq 0$, $\Psi_{K,L} = \Psi_{0, L}$ if $K \leq 0 \leq L$, and $\Psi_{K,L} = 0$ if $L < 0$. Note that
$$ \sum_{n=k}^l \frac{n p^n}{n!} = p \sum_{n=k-1}^{l-1} \frac{p^n}{n!} = p \Psi_{k-1,l-1}(p), \quad C = \frac{1}{\Psi_{k,l}(p)}, \quad \sum_{n=k}^l \frac{e^{tn} p^n}{n!} = \sum_{n=k}^l \frac{(yp)^n}{n!} = \Psi_{k,l}(yp). $$
Thus we wish to prove that for all $y \geq 0$,
\begin{align*}
    e^{\frac{p\Psi_{k-1,l-1}(p)}{\Psi_{k,l}(p)}(y-1)} \geq \frac{\Psi_{k,l}(yp)}{\Psi_{k,l}(p)}.
\end{align*}
Taking logarithms and rearranging, this is equivalent to
\begin{align*}
    f(y) := \frac{p\Psi_{k-1,l-1}(p)}{\Psi_{k,l}(p)}(y-1) - \log \Psi_{k,l}(yp) + \log {\Psi_{k,l}(p)} \geq 0.
\end{align*}
To this end, we claim that $f(1) = f'(1) = 0$ and that $f(y)$ is convex. Observe that
$$ f(1) = 0 - \log {\Psi_{k,l}(p)} + \log {\Psi_{k,l}(p)} = 0, $$ and
$$ \frac{d}{dx} \Psi_{k,l}(x) = \sum_{n=k}^l \frac{n x^{n-1}}{n!} = \sum_{n=k-1}^{l-1}\frac{x^n}{n!} = \Psi_{k-1,l-1}(x). $$  Thus,
\begin{align*}
    f'(y) = \frac{p\Psi_{k-1,l-1}(p)}{\Psi_{k,l}(p)} - \frac{p \Psi_{k-1,l-1}(yp)}{\Psi_{k,l}(yp)}
\end{align*}
and $f'(1) = 0$. Finally,
\begin{align*}
    f''(y) =  - p^2 \frac{\Psi_{k,l}(yp) \Psi_{k-2,l-2}(yp) - \Psi_{k-1,l-1}^2(yp)}{\Psi_{k,l}^2(yp)}.
\end{align*}
Note that for fixed $x \geq 0$ and $m \in \N$, $z_l = \sum_{n=l-m}^l \frac{x^n}{n!}$, $l \geq m$, can be seen as the convolution of the sequence $\{x_n\}$ and $\{y_n\}$, where $x_n = \frac{x^n}{n!}$ for $n \geq 0$ and $x_n = 0$ for $n<0$, and $y_n = 1$ for $n \in \{0, \dots, m\}$ and $y_n = 0$ otherwise. Both $\{x_n\}$ and $\{y_n\}$ are log-concave sequences. Therefore, their convolution is also log-concave \cite{Ho}. Hence, $z_l$ is log-concave in $l$, and by choosing $m = l-k$ in the inequality $z_{l-1}^2 \geq z_l z_{l-2}$, we deduce that
$$ \Psi_{k,l}(yp) \Psi_{k-2,l-2}(yp) - \Psi_{k-1,l-1}^2(yp) \leq 0. $$
Therefore, we conclude that $f''(y) \geq 0$, and our result follows.
\end{proof}

We now derive Theorem \ref{concentration}.

\begin{proof}[Proof of Theorem \ref{concentration}]
Let $X$ be an ultra log-concave random variable. It is standard that concentration bounds follow from upper bounds on the moment generating function (see, e.g., \cite{BLM}, \cite{V}). Let $x \geq 0$. Applying Markov's inequality, we have for all $t>0$
$$ \P(X \geq \E[X] + x) = \P(e^{tX} \geq e^{t(\E[X] + x)}) \leq e^{-t(\E[X] + x)} \E[e^{tX}] \leq e^{-t(\E[X] + x)} e^{\mathbb{E}[X] (e^t - 1)}, $$
where the last inequality comes from Lemma \ref{main}. Optimizing over $t>0$, that is, taking $t = \log(1+\frac{x}{\E[X]})$, yields
$$ \P(X \geq \E[X] + x) \leq e^{-\frac{x^2}{2 \E[X]} h(\frac{x}{\E[X]})}, $$
where the function $h$ is defined in \eqref{Ben}. The argument is similar for the small deviations. Let $x \in [0, \E[X]]$. Then, for all $t>0$,
\begin{equation}\label{mid}
    \P(X \leq \E[X] - x) = \P(e^{-tX} \geq e^{-t(\E[X]-x)}) \leq e^{t(\E[X]-x)} e^{\E[X](e^{-t} - 1)}.
\end{equation} 
Optimizing over all $t>0$, that is, taking $t=-\log(1-\frac{x}{\E[X]})$ when $x < \mathbb{E}[X]$, yields
\begin{equation*}
    \P(X \leq \E[X] - x) \leq e^{-\frac{x^2}{2 \E[X]} h(-\frac{x}{\E[X]})}.
\end{equation*} 
When $x = \mathbb{E}[X]$, taking $t \to \infty$ in \eqref{mid} yields the result, as $h(-1) = 2$.
\end{proof}

We also deduce Proposition \ref{var} from Lemma \ref{main}.

\begin{proof}[Proof of Proposition \ref{var}]
    If $X$ is ultra log-concave and $\mathbb{E}X = \lambda > 0$, then by Lemma \ref{main}
    \begin{align*}
        \mathbb{E} e^{tX} \leq \mathbb{E} e^{t Z}
    \end{align*}
    where $Z$ is a Poisson random variable with parameter $\lambda$.  Expanding the inequality, this is
    \begin{align*}
        1 + \lambda t + \frac{t^2}{2} \E[X^2]  \leq 1 + \lambda t + \frac{t^2}{2} \E[Z^2] + o(t^2).
    \end{align*}
    Cancelling terms, dividing by $t^2$ and taking $t \to 0$ gives
    $
        \mathbb{E}[X^2] \leq \mathbb{E}[Z^2]
    $
    so that 
    \begin{align*}
        \Var(X) \leq \Var(Z) = \lambda.
    \end{align*}
\end{proof}

\section{Concentration for the intrinsic volumes of a convex body}

In this section, we apply our results to obtain concentration bounds for the intrinsic volumes of a convex body $K$. Our bounds improve upon \cite{LMNPT}.

For two convex bodies $K, E \subset \mathbb{R}^n,\,$ and $t \in \mathbb{R}^{+},\,$  the mixed volumes of $K$ and $E$, denoted by $V_i(K,E)$, are defined as the coefficients of the following polynomial describing the volume ($n$-dimensional Lebesgue measure) of Minkowski sum of $K$ and $tE,\,$
\begin{equation}\label{steiner}
V(K + tE) = \sum_{i=0}^n\, \binom{n}{i}\,V_i\, (K,E)\,t^i.
\end{equation}
Let $B_2^n$ denote the $n$-dimensional Euclidean unit ball and $\kappa_n$ denote the volume of $B_2^n$. When $E=B_2^n$, the polynomial \eqref{steiner} becomes the Steiner polynomial which can be written via the normalization $V_i(K) = \binom{n}{i}\,V_{n-i} (K,B_2^n)/ \kappa_{n-i}$ as,
$$ V(K + tB_2^n) = \sum_{i=0}^n\,\kappa_{n-i} V_i(K)\,t^{n-i}, $$ 
Here, $V_i(K)$ is called the $i$-th intrinsic volume of $K$. We refer to \cite{Sch} for further details about intrinsic volumes.

The total intrinsic volume of a convex body $K$, called the Wills functional, is the quantity
$$ W(K) = \sum_{i=0}^n\, V_i(K). $$
The normalized intrinsic volumes consists in the sequence $\{\widetilde{V}_i(K) : i = 0, \dots, n\}$, where
$$ \widetilde{V}_i(K) = \frac{V_i(K)}{W(K)}. $$
Using the terminology from \cite{LMNPT}, the intrinsic volume random variable $Z_K$ associated with a convex body $K$ in $\R^n$, takes non-negative integer values according to the following distribution,
$$ P(Z_K = j) = \widetilde{V}_j(K), \qquad j=0, \dots,n. $$

It is a consequence of the Alexandrov-Fenchel inequality that the intrinsic volumes of a convex body form an ultra log-concave sequence \cite{Mc} (see also \cite{L}, \cite{G}). Consequently, the intrinsic volume random variable $Z_K$ is ultra log-concave. Therefore, Corollary \ref{bound} tells us the following.

\begin{coro}\label{intrinsic}

Let $K$ be a convex body in $\R^n$, and let $Z_K$ be the intrinsic volume random variable associated with $K$. Then, for all $0 \leq t \leq \sqrt{n}$,
$$ \P(|Z_K - \E[Z_K]| \geq t \sqrt{n}) \leq 2e^{-\frac{t^2}{2}}. $$

\end{coro}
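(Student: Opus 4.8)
The plan is to apply Corollary~\ref{bound} directly to $Z_K$, exploiting the single additional structural fact that $Z_K$ is supported on $\{0,1,\dots,n\}$. As recalled just above the statement, the Alexandrov--Fenchel inequality makes the normalized intrinsic volumes an ultra log-concave sequence, so $Z_K$ is an ultra log-concave random variable and Corollary~\ref{bound} applies. Moreover, since $\P(Z_K = j) = 0$ for $j \notin \{0,\dots,n\}$, the mean satisfies $0 \le \E[Z_K] \le n$. I would set $s = t\sqrt{n}$ throughout, bound the two tails separately, and then combine them by a union bound.

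For the lower tail this is immediate: Corollary~\ref{bound} gives
$$ \P(Z_K - \E[Z_K] \le -t\sqrt{n}) \le e^{-\frac{t^2 n}{2\E[Z_K]}}, $$
and since $\E[Z_K] \le n$ the exponent is at least $t^2/2$, so this tail is at most $e^{-t^2/2}$. The upper tail is the step where one must be slightly careful, since the naive substitution $s = t\sqrt{n}$ into $e^{-s^2/(2(s+\E[Z_K]))}$ does not by itself produce $e^{-t^2/2}$. Here I would split into two cases. If $\E[Z_K] + t\sqrt{n} > n$, then the event $\{Z_K \ge \E[Z_K] + t\sqrt{n}\}$ forces $Z_K > n$, which is impossible, so the probability is $0$. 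Otherwise $\E[Z_K] + t\sqrt{n} \le n$, and then
$$ \frac{(t\sqrt{n})^2}{2(t\sqrt{n} + \E[Z_K])} \ge \frac{t^2 n}{2n} = \frac{t^2}{2}, $$
so Corollary~\ref{bound} again yields at most $e^{-t^2/2}$. In either case the upper tail is bounded by $e^{-t^2/2}$.

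Combining the two estimates by the union bound gives $\P(|Z_K - \E[Z_K]| \ge t\sqrt{n}) \le 2e^{-t^2/2}$, as claimed; the restriction $0 \le t \le \sqrt{n}$ only keeps us in the natural range where the bound is informative, since for $t > \sqrt{n}$ both tails vanish identically (the upper event forces $Z_K > n$ and the lower event forces $Z_K < 0$). The only genuine subtlety, and the single place where more than a formal substitution is needed, is the upper-tail case analysis: the sub-exponential form of the upper bound in Corollary~\ref{bound} degrades the exponent through the additive $s$ in the denominator, and it is precisely the boundedness of the support $\{0,\dots,n\}$ that restores the clean Gaussian rate $t^2/2$.
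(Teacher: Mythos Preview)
Your proof is correct and follows essentially the same argument as the paper: the lower tail is bounded directly from Corollary~\ref{bound} using $\E[Z_K]\le n$, while the upper tail is handled by the same case split on whether $\E[Z_K]+t\sqrt{n}$ exceeds $n$, with the nontrivial case reducing to $\frac{t^2 n}{2(t\sqrt{n}+\E[Z_K])}\ge \frac{t^2}{2}$. There is nothing to add.
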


\begin{proof}
If $n < t\sqrt{n}+\E[Z_K]$, then one trivially has $\P(Z_K - \E[Z_K] \geq t \sqrt{n}) = 0$ since $Z_K \leq n$. If $n \geq t\sqrt{n}+\E[Z_K]$, then by Corollary \ref{bound},
$$ \P(Z_K - \E[Z_K] \geq t \sqrt{n}) \leq e^{-\frac{t^2}{2} \frac{n}{t\sqrt{n} + \E[Z_K]}} \leq e^{-\frac{t^2}{2}}. $$
For the small deviations, since $\E[Z_k] \leq n$, by Corollary \ref{bound},
$$ \P(Z_K - \E[Z_K] \leq -t \sqrt{n}) \leq e^{-\frac{t^2}{2} \frac{n}{\E[Z_K]}} \leq e^{-\frac{t^2}{2}}. $$
\end{proof}

On the other hand, if $\E[Z_K]$ is of order $\sqrt{n}$, say $\E[Z_K] \leq c\sqrt{n}$ for some absolute constant $c>0$, Corollary \ref{intrinsic} readily yields the bound
$$ \P(|Z_K - \E[Z_K]| \geq t \sqrt{n}) \leq 2e^{-\frac{t^2\sqrt{n}}{2(t+c)}}. $$
In fact, any $o(n)$ bound for $\E[Z_K]$ will yield a concentration bound that decay to 0 as the dimension grows to $\infty$, for any fixed $t>0$.

Note that if $K$ is a fixed convex body in $\R^n$, there is always a dilation parameter $r > 0$ (possibly dependent on the dimension) such that $\E[Z_{rK}] = o(n)$. As noted in \cite{LMNPT}, the intrinsic volumes random variables of the scaled unit cube $r[0,1]^n$ has a binomial distribution with parameters $n$ and $\frac{r}{1+r}$, therefore
$$ \E[Z_{r[0,1]^n}] = n\frac{r}{1+r}. $$
Here, taking $r=o(1)$ will yield $\E[Z_{r[0,1]^n}] = o(n)$.

\vskip2cm


\noindent Heshan Aravinda \\
Department of Mathematics \\
University of Florida \\
Gainesville, FL 32611, USA \\
heshanaravinda.p@ufl.edu

\vspace{0.8cm}

\noindent Arnaud Marsiglietti \\
Department of Mathematics \\
University of Florida \\
Gainesville, FL 32611, USA \\
a.marsiglietti@ufl.edu

\vspace{0.8cm}

\noindent James Melbourne \\
Probabilidad y Estad\'istica \\
Centro de Investigaci\'on en Matem\'aticas (CIMAT) \\
Guanajuato, Gto 36023, M\'exico\\
james.melbourne@cimat.mx

\end{document}